\documentclass[11pt,a4paper]{amsart}

\usepackage{lmodern}

\usepackage{todonotes}
\usepackage[english]{babel}

\usepackage[utf8]{inputenc} 	
\usepackage[T1]{fontenc}    	
\usepackage{hyperref}       	
\usepackage{amssymb}            
\usepackage{url}            		
\usepackage{booktabs}       	
\usepackage{amsfonts}       	
\usepackage{nicefrac}       	
\usepackage{microtype}      	
\usepackage{lipsum}			
\usepackage{graphicx}
\usepackage{subcaption}
\usepackage{hyperref} 				
\usepackage{tikz}
\usepackage{amsmath}
\usepackage{enumerate}
\usepackage{mwe}
\usepackage[shortlabels]{enumitem} 	
\setlist[itemize]{noitemsep} 			
\usepackage{graphbox}				
\usepackage{float}
\usepackage[toc,page]{appendix}
\usepackage{mathtools}
\usepackage{stmaryrd}
\usepackage{faktor}
\usepackage{cancel}
\usepackage[alphabetic, abbrev]{amsrefs}  

\usepackage{tikz}
\usepackage{stackengine}

\usepackage{upgreek}

\theoremstyle{plain}
\newtheorem*{definition*}{Definition}
\newtheorem{theorem}{Theorem}

\newtheorem{lemma}[theorem]{Lemma}

\newtheorem{corollary}[theorem]{Corollary}
\theoremstyle{definition}

\theoremstyle{remark}
\newtheorem*{example*}{Example}
\newtheorem{remark}[theorem]{Remark}
\newtheorem{example}[theorem]{Example}

\hypersetup{
    colorlinks,
    linkcolor={red!50!black},
    citecolor={blue!50!black},
    urlcolor={blue!80!black}
}

\DeclareMathOperator{\Lie}{Lie}

\newcommand{\R}{\mathbb{R}}			
\newcommand{\N}{\mathbb{N}}			
\newcommand{\F}{\mathcal{F}}			
\newcommand{\A}{\mathcal{A}}			
\newcommand{\Or}{\mathcal{O}}		
\newcommand{\Proj}{\mathbb{R}\mathbb{P}}			

\newcommand{\spa}{\mathrm{span}}

\newcommand{\M}{\mathcal{M}}

\newcommand{\cl}{\operatorname{cl}}

\providecommand{\keyword}[1]
{
  \small	
  \textbf{Keywords:} #1
}

\usepackage[bottom=3cm, top=3cm, left=3.5cm, right=3.5cm]{geometry}

\newcommand{\mysetminusD}{\hbox{\tikz{\draw[line width=0.6pt, line cap=round] (3pt,0) -- (0,6pt);}}}
\newcommand{\mysetminusT}{\mysetminusD}
\newcommand{\mysetminusS}{\hbox{\tikz{\draw[line width=0.45pt, line cap=round] (2pt,0) -- (0,4pt);}}}
\newcommand{\mysetminusSS}{\hbox{\tikz{\draw[line width=0.4pt, line cap=round] (1.5pt,0) -- (0,3pt);}}}
\newcommand{\mysetminus}{\mathbin{\mathchoice{\mysetminusD}{\mysetminusT}{\mysetminusS}{\mysetminusSS}}}

\title[Approximately controllable bilinear systems are controllable]{Approximately controllable finite-dimensional\\bilinear systems are controllable}

\date{\today} 					





\author{Daniele Cannarsa}
\address{Daniele Cannarsa, Universit\'e de Paris, Sorbonne Universit\'e, CNRS, Inria, Institut de Math\'ematiques de Jussieu-Paris Rive Gauche, F-75013 Paris, France}
\email{daniele.cannarsa@imj-prg.fr}

\author{Mario Sigalotti}
\address{Mario Sigalotti, Laboratoire Jacques-Louis Lions, Sorbonne Université, Université de Paris, CNRS, Inria, Paris, France}
\email{mario.sigalotti@inria.fr}


\begin{document}

\maketitle

\begin{abstract}
We show that a bilinear control system  is approximately controllable if and only if it is controllable in $\R^{n}\mysetminus\{0\}$. We approach  this {property} 
by looking at the foliation made by the orbits of the system, and by showing that  there does not exist a codimension one foliation in $\R^{n}\mysetminus\{0\}$ with dense leaves that are everywhere transversal to the radial direction.
The proposed geometric approach allows to extend the results to homogeneous systems that are angularly controllable.
\end{abstract}
\bigskip\bigskip
 \keyword{{bilinear control systems, controllability, foliations}}
 \bigskip
\setcounter{tocdepth}{1}


\section{Introduction and statement of the main result}

In this note we study  control systems of the form
	\begin{equation}\tag{$\Sigma$} \label{eqn:control}
	 \dot x = M(t)x,
	\end{equation}
where  the state $x$ is in $\R^{n}\mysetminus \{0\}$, $n\geq 1$, and 
the admissible controls 
{
$M:[0,+\infty)\to \mathcal{M}$ 
take
values in a subset $\mathcal{M}$ of the space 
$M_{n}(\R)$ of  $n\times n$ matrices with real coefficients.
For notational simplicity we shall take as admissible controls
piecewise constant functions with values in $\mathcal{M}$.}
By a slight abuse of notation, we refer to control systems such as \eqref{eqn:control}
as \emph{bilinear control systems}, although the latter term usually 
denotes systems for which 
$M(t)=A+u^{1}(t)B_{1}+\dots +u^{m}(t)B_{m}$ for some fixed $A, B_{1}, \dots, B_{m}\in M_n(\R)$, with control $t\mapsto (u^1(t),\dots,u^{m}(t))$ taking values in some subset $U$ of $\R^{m}$. 
 %
For an introduction to bilinear control system we refer to \cite{Colonius00} and \cite{elliott2009bilinear}.
{Notice that bilinear hybrid systems of the form $\dot x=(A_{q(t)}+u^{1}(t)B_{1,q(t)}+\dots +u^{m}(t)B_{m,q(t)})x(t)$, where $q(t)$ is a control parameter with discrete values (cf. \cite{Petreczky-vanSchuppen}) are also included in our setting.}

Given  $t>0$,  an initial state $x_{0}\in \R^{n}$, and an admissible control $M:[0,+\infty)\to \mathcal{M}$, let us denote by $x(t,x_{0},M)$ the value at time $t$ of the solution of \eqref{eqn:control} with initial state $x_{0}$ and control $M$.
The attainable set from an $x_{0}$ in $\R^{n}$ is
	\[
	\A_{x_{0}} = \{x(t,x_{0},M) \mid  t\geq 0, ~ M:[0,+\infty)\to \mathcal{M}\mbox{ piecewise constant}  \}.
	\]
System \eqref{eqn:control} is said to be  \textit{controllable} if,  for all state $x$, one has $ \A_{x}=\R^{n}\mysetminus\{0\}$,
	while it is said to be \textit{approximately controllable} if, for all  state $x$, one has
	$
	\cl(\A_{x})=\R^{n}$,
where $\cl$ denotes the closure of a set in $\R^{n}$.
The main result of this note is the following. 

\begin{theorem}\label{thm:conjecture-1}
	\eqref{eqn:control} is approximately controllable if and only if  \eqref{eqn:control} is controllable.
	\end{theorem}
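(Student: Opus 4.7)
The plan is to prove the nontrivial direction by contradiction: suppose \eqref{eqn:control} is approximately controllable but not controllable (the case $n=1$ is vacuous since attainable sets of scalar bilinear systems stay in one open half-line). By Sussmann's orbit theorem the orbits $\Or_{x_0}$ of \eqref{eqn:control} are connected immersed submanifolds of $\R^{n}\mysetminus\{0\}$ of a common dimension $d\le n$, and since $\A_{x_0}\subseteq \Or_{x_0}$ with $\cl(\A_{x_0})=\R^{n}$, approximate controllability forces every orbit to be dense in $\R^{n}\mysetminus\{0\}$.

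I would first handle the case $d=n$. Each orbit is open, and by connectedness of $\R^{n}\mysetminus\{0\}$ there is a single orbit equal to $\R^{n}\mysetminus\{0\}$. Then every $\A_{x_0}$ has non-empty interior while $\cl(\A_{x_0})=\R^{n}$; combining these with approximate controllability from every point and with the semigroup property of attainable sets, a standard concatenation argument upgrades density to equality $\A_{x_0}=\R^{n}\mysetminus\{0\}$, contradicting non-controllability. In the remaining case $d<n$, the orbit partition is a (weak) foliation of codimension $n-d\ge 1$ whose every leaf is dense. Exploiting the homogeneity $\Or_{\lambda x_0}=\lambda\,\Or_{x_0}$ for $\lambda>0$, I would split according to whether the Lie algebra generated by $\mathcal{M}$ contains an element acting radially at some point of $\R^{n}\mysetminus\{0\}$: in the radial case the leaves are cones, the foliation descends to a foliation of $\Proj^{n-1}$ with dense leaves, and one inducts on the dimension; in the non-radial case the foliation is everywhere transversal to the radial direction, and after a transverse slicing to bring the codimension down to one, the hypotheses of the main geometric lemma announced in the abstract are met.

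That lemma, which I would establish last, asserts that no codimension-one foliation of $\R^{n}\mysetminus\{0\}$ with dense leaves everywhere transversal to the radial direction exists. The proof idea is that radial transversality makes each leaf locally a graph $\{r=f(\theta)\}$ over $S^{n-1}$ via the projection $x\mapsto x/|x|$, whereas density of the leaf forces two distinct points on the same ray to lie on the same leaf, contradicting the local graph property. The main obstacle is precisely this geometric lemma and the reduction to it: dense leaves can wind around $\R^{n}\mysetminus\{0\}$ in intricate ways, so the local graph picture must be promoted into a genuinely global obstruction in which the radial structure of $\R^{n}\mysetminus\{0\}$ plays a central role; one must also verify that the codimension reduction and the descent to $\Proj^{n-1}$ in the radial case preserve both density of the leaves and their transversality to the radial direction.
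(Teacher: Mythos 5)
Your overall architecture (dichotomy on orbit dimension, reduction to a foliation statement, a geometric lemma about codimension-one foliations transversal to the radial direction) matches the paper's, but the two steps that carry all the difficulty are not actually proved, and one of them is argued via a reasoning that is false as stated.

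First, your proof idea for the geometric lemma does not work. Being locally a graph $\{r=f(\theta)\}$ over $S^{n-1}$ is in no way contradicted by the leaf meeting a single ray in two (or infinitely many) points: the leaf can spiral, exactly as the irrational line on $\mathbb{T}^2$ is dense yet locally a graph over $S^1$. So ``density forces two points on the same ray, contradicting the local graph property'' is not a contradiction, and you acknowledge that the global obstruction is the real issue without supplying it. The paper's actual mechanism is quite different: for $n\ge 3$ it intersects the foliation with each plane $\mathcal{P}_\theta=\spa\{p,\theta\}$, obtains a spiralling arc $C_\theta$ from $p$ to a first-return point $p_\theta$ on the opposite ray, shows the map $\theta\mapsto p_\theta$ is continuous with image in a set of empty interior and hence constant on the connected $S^{n-2}$, and then observes that $S=\bigcup_\theta C_\theta$ is a topological $(n-1)$-sphere that is both closed and open in the leaf $L_p$, so $L_p=S$ is compact — contradicting density. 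For $n=2$ it invokes Whitney's theorem and the nonexistence of flows with dense orbits on $S^2$ minus finitely many points. None of this is recoverable from your sketch.

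Second, your reduction to the codimension-one, everywhere-transversal situation is not established. The dichotomy ``some Lie bracket acts radially somewhere'' versus ``the foliation is everywhere transversal to the radial direction'' is not exhaustive in a usable way (tangency to the radial direction could occur at some points and not others), the ``leaves are cones, descend to $\Proj^{n-1}$ and induct'' branch has no well-founded induction hypothesis (a dense-leaved foliation of $\Proj^{n-1}$ is not the orbit foliation of a lower-dimensional bilinear system), and ``transverse slicing to bring the codimension down to one'' is not a construction — slicing a codimension-$k$ foliation does not produce a codimension-one foliation of some $\R^m\mysetminus\{0\}$ with dense leaves transversal to the radial direction. The paper avoids all of this by importing two nontrivial external results: approximate controllability of the projectivized system on $\Proj^{n-1}$ implies its controllability, and controllability on $\Proj^{n-1}$ is equivalent to controllability on $S^{n-1}$. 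Together these give angular controllability, i.e.\ $\pi_{*}(T_y\Or_x)=T_{\pi(y)}S^{n-1}$, which simultaneously forces the orbit foliation to have codimension exactly one and to be everywhere transversal to the radial direction. Without these inputs (or substitutes for them) your case analysis cannot be closed.
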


This result shows that the a priori weaker notion of approximate controllability implies controllability with no additional assumption, other than the 
finite-dimensional 
system being bilinear.
This is useful when topological arguments lead directly to approximate controllability results, as it is the case for constructions based on control sets, whose definition involve the closure of attainable sets \cite{Colonius00}. 
For example, in \cite{colonius2021control} the authors use Theorem~\ref{thm:conjecture-1} to conclude that their sufficient condition for approximate controllability,  expressed in terms of
 the Floquet spectrum of the bilinear system, 
actually yields controllability (see  \cite[Corollary~3.19]{colonius2021control}).

Another possible way of applying Theorem~\ref{thm:conjecture-1} is the following: if for a bilinear system one is able to identify   vector fields 
compatible with \eqref{eqn:control} in the sense of \cite[Definition~8.4]{book} that, when added to the admissible ones, lead to approximate controllability, then controllability of \eqref{eqn:control} follows, without the need of checking the Lie algebra rank condition.
Such an extension argument by compatible vector fields is, e.g.,  at the core of the results in \cite{1412010}, which can therefore be improved by our result. In particular, Theorem~\ref{thm:conjecture-1} implies that the hypotheses ii)  and iii) on the existence of stable and antistable equilibria can be dropped from   
\cite[Theorem~4.3]{1412010}. Similarly, Propositions~3.3 and 3.6
from the same paper 
can be strengthened by replacing in their conclusions approximate and practical controllability by controllability. 

The result in Theorem~\ref{thm:conjecture-1} is in sharp contrast with the case of bilinear systems in infinite-dimension: when the controlled operators $B_i$ appearing in the representation $M(t)=A+u^1(t)B_1+\dots+u^m(t)B_m$ are bounded,
these systems cannot be controllable (see \cite[Theorem~3.6]{BMS82} and also \cite{BoussaidCaponigroChambrion-JFA} for recent extensions), while there exist some criteria for approximate controllability (see, e.g., \cite[Chapters 4 and 9]{Khapalov}, \cite{CannarsaFloridiaKhapalov} and \cite{BoscainCaponigroSigalotti}). 

 For general finite-dimensional systems (to which the notions of controllability and approximate controllability straightforwardly extend), while controllability clearly implies approximate controllability, the converse may fail to hold. 
A standard example  can be provided using the 
irrational winding of a line in the torus {$\mathbb{T}^{n}$, $n\ge 2$}. 
%
%
%
%

On the other hand, the equivalence stated in Theorem~\ref{thm:conjecture-1} is known to hold for 
some classes of control systems. 
This is the case for 
linear control systems in $\R^{n}$, i.e., systems of the form
\begin{equation}\label{eq:lin_cont_sys}
  \dot x = A x + B u(t), \qquad 
  u:[0,+\infty)\to \R^{m},\quad x\in \R^{n},
\end{equation}
{with} 
$A\in M_{n}(\R)$ and $B\in M_{n\times m}(\R)$.
Indeed, if system  {\eqref{eq:lin_cont_sys}} is approximately controllable, then the attainable set from the origin $\A_{0}$ is dense in $\R^n$. Since  $\A_{0}$ is a linear space (and in particular it is closed), it follows that $\A_{0}=\R^n$, which is well known to be equivalent to the controllability of
{\eqref{eq:lin_cont_sys} due to the linear structure of the system}.

Few other classes of control systems for which approximate controllability implies controllability are known:  
closed quantum systems on 
$S^{n-1}$
 \cite[Theorem~17]{Boscain_2014};  
right-invariant control systems on {simple} Lie groups {(as it follows from \cite[Lemma~6.3]{JURDJEVIC1972313} and \cite[Note at p.~312]{smith})};    
  control systems obtained by projecting  onto $\Proj^{n-1}$ systems of the form of \eqref{eqn:control} \cite[Proposition~44]{Boarotto_2020}.

By proving Theorem~\ref{thm:conjecture-1}, we are able to identify {bilinear control systems as a} new class 
of systems for which approximate controllability and controllability are equivalent. 
The proposed proof of Theorem~\ref{thm:conjecture-1} (which can be found in Section~\ref{sec:3}), works 
first by deducing from \cite{Boarotto_2020} and \cite{article} that, if
{the projection of \eqref{eqn:control} onto $\Proj^{n-1}$}
is approximately controllable, then the orbits of \eqref{eqn:control} are 
transversal to the radial direction, and then by proving 
that there does not exist a codimension one foliation in $\R^{n}\mysetminus\{0\}$ with dense leaves which are transversal to the radial direction. 

As a byproduct of this demonstration strategy, we can extend Theorem~\ref{thm:conjecture-1} to  angularly controllable homogenous control systems (see Corollary~\ref{cor:homogeneous}).

%

\subsection*{Acknowledgements}
This work was supported by the Grant 
{ANR-17-CE40-0007-01 QUACO}
of the French ANR.
DC is supported by the DIM Math Innov grant from R\'egion \^Ile-de-France. The authors want to thank Francesco Boarotto, Ugo Boscain and Leonardo Martins Bianco for preliminary discussions on the subject of this note. 

\section{Notations and related results from the literature}\label{sec:2}

Given a matrix $M$ in $\mathcal{M}$, let us denote by $f_{M}$ the associated vector field 
	\[
	f_{M}:x\mapsto Mx, \qquad x \in \R^{n}\mysetminus\{0\}.
	\]
Since the vector fields $f_{M}$ are $\R$-homogenous, 
$\mathcal{F}^{\mathcal{M}}=\{f_{M}\mid  M\in \mathcal{M}\}$ is a family of analytical, homogenous vector fields in $\R^{n}\mysetminus\{0\}$. 
System \eqref{eqn:control} is a special case of a general control system
	\begin{equation}\label{eqn:system}\tag{C}
	\dot x = f_{u(t)}(x), 
	\end{equation}
where the state $x$ evolves in a {$n$-dimensional} manifold $X$ following a fixed family $\F$ of admissible vector fields indexed by a set $U$, i.e., $\F=\{f_u\mid u\in U\}$, with piecewise constant  controls $u:[0,+\infty)\to U$. 
(For system \eqref{eqn:control}  we have $X=\R^{n}\mysetminus \{0\}$ and $\F=\F^{\M}$.)
For notational simplicity, we assume that each  vector field $f$ in $\F$ is complete. We denote by $e^{tf}$ the flow at time $t$ of the equation $\dot x=f(x)$.
The \emph{attainable set from a point $x$ in $X$ for \eqref{eqn:system}} is
	\[
	\A_{x}	= \{e^{t_{1}f_{1}}\circ \dots \circ e^{t_{k}f_{k}}(x)\mid k\in \N,~ t_{1},\dots,t_k\geq 0, ~ f_{1},\dots,f_k\in \F \};
	\]
similarly, the \emph{orbit of $x$ for \eqref{eqn:system}} is
	\[
	\mathcal{O}_{x} 	= \{e^{t_{1}f_{1}}\circ \dots \circ e^{t_{k}f_{k}}(x)\mid k\in \N,~ t_{1},\dots,t_k\in \R,~ f_{1},\dots,f_k\in \F  \}.
	\]
The difference between  orbits and  attainable sets is that the orbits are constructed by using positive and negative time directions; thus, $\A_{x}\subset \mathcal{O}_{x}$ for all $x$ in $X$.
The orbits satisfy the following important property.

\begin{theorem}[Orbit theorem, \cite{10.2307/1996660}]\label{thm:orbit}
	For every $x\in X$, the orbit $\mathcal{O}_{x}$ is a connected, immersed submanifold of $X$.
	Moreover, 
	for all $y\in \mathcal{O}_{x}$,
	\[
	T_{y}\mathcal{O}_{x}= \spa_{\R}\big\{(e^{t_{1}f_{1}}_{*}\circ \dots \circ e^{t_{k}f_{k}}_{*}g)(y)\mid k\in \N,~ t_{1},\dots,t_k\in \R, ~ f_{1},\dots,f_k,g\in \F\big\},
	\]
	where $e^{t_{i}f_{i}}_{*}$ denotes the  operator of push-forward of vector fields along the flow $e^{t_{i}f_{i}}$. 
\end{theorem}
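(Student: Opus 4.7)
The plan is to realize the orbit $\mathcal{O}_x$ as the maximal connected integral manifold of the (generally non-smooth, but flow-invariant) distribution on $X$ defined by the right-hand side of the claimed formula. Concretely, for every $y\in X$, set
\[
\Delta_y := \spa_{\R}\big\{(e^{t_{1}f_{1}}_{*}\circ \dots \circ e^{t_{k}f_{k}}_{*}g)(y)\mid k\in \N,~ t_i \in \R,~ f_i, g \in \F\big\}.
\]
By construction, $\Delta$ is invariant under push-forward by any flow $e^{tf}$ with $f\in\F$, so $y\mapsto \dim \Delta_y$ is constant on each orbit; denote by $d$ its value on $\mathcal{O}_x$.

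Next I would build local parameterizations. Pick vectors of the form
\[
v_i = (e^{t_{1,i}f_{1,i}}_{*}\circ \dots \circ e^{t_{k_i,i}f_{k_i,i}}_{*}g_i)(x), \qquad i=1,\dots,d,
\]
forming a basis of $\Delta_x$, and let $V_i$ be the smooth vector field obtained by evaluating the same expression at the variable point $y$; each $V_i$ is the push-forward of an element $g_i\in\F$ by a diffeomorphism built from flows of members of $\F$, and hence the flow $e^{sV_i}$ preserves orbits. The map
\[
\psi : (s_1,\dots,s_d)\longmapsto e^{s_1V_1}\circ\dots\circ e^{s_dV_d}(x)
\]
then satisfies $\partial_{s_i}\psi(0)=V_i(x)=v_i$, so $\psi$ is an immersion at the origin and, after shrinking its domain, an injective immersion into $\mathcal{O}_x$. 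Performing this construction at every point of $\mathcal{O}_x$ yields a candidate smooth atlas of dimension $d$; connectedness is automatic since every $y\in\mathcal{O}_x$ is joined to $x$ by a concatenation of flow segments. Once the manifold structure is in place, both inclusions in $T_y\mathcal{O}_x=\Delta_y$ fall out: $\Delta_y\subset T_y\mathcal{O}_x$ is clear from the very existence of charts of type $\psi$ based at $y$, while the reverse inclusion follows from the flow-invariance of $\Delta$ together with the fact that a chart $\psi$ locally sweeps out $\mathcal{O}_x$ via flows of vector fields that are themselves push-forwards of elements of $\F$.

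The delicate step is the compatibility of two such parameterizations, built at the same base point from different choices of flows: one must verify that the resulting transition maps are smooth diffeomorphisms between open subsets of $\R^d$. This cannot be deduced from general submanifold gluing in $X$, because the intrinsic topology of $\mathcal{O}_x$ can be strictly finer than the subspace topology inherited from $X$ (the classical irrational winding on the torus being the standard cautionary example). The argument instead requires one to keep track of the pseudogroup generated by the flows of $\F$ and show that its elements act as local diffeomorphisms for the constructed manifold structure; the constancy of $\dim\Delta_y=d$ along the orbit, together with the invariance of $\Delta$ under flows in $\F$, is precisely what forces these transition maps to be smooth of maximal rank.
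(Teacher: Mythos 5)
The paper does not prove this statement: it is quoted as the classical Orbit Theorem and attributed to Sussmann's paper, so there is no internal proof to compare against. Your outline is precisely Sussmann's strategy (the same one followed in standard references such as Agrachev--Sachkov): take the distribution $\Delta$ spanned by push-forwards of elements of $\F$ under compositions of flows of $\F$, observe its invariance and hence the constancy of $\dim\Delta$ along an orbit, parameterize the orbit locally by maps $\psi(s_1,\dots,s_d)=e^{s_1V_1}\circ\cdots\circ e^{s_dV_d}(x)$ with $V_i$ of the pushed-forward form, and read off the tangent-space formula. As a proof, however, the crux is exactly the step you only name in your last paragraph: showing that the images of all such maps generate a well-defined (finer-than-subspace) topology and smooth structure on $\mathcal{O}_x$ for which the transition maps are diffeomorphisms. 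The standard way to close this is to prove first that \emph{any} map built from flows of $\F$ composed with a $\psi$-chart has rank at most $d$ everywhere (a direct consequence of the invariance $e^{tf}_{*}\Delta_y=\Delta_{e^{tf}(y)}$), and then that near a point of maximal rank the image of one such chart is contained in the image of any other, which yields both the injectivity you assert after shrinking and the smoothness of transitions via the constant-rank theorem. Since you state this step as a requirement rather than carrying it out, your proposal is a faithful and correct outline of the classical argument, but not yet a complete proof; everything you do write is consistent with how the full argument goes.
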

It follows from the orbit theorem that the orbits define a \textit{foliation}, i.e., a partition of $X$ in immersed, connected, submanifolds (called \textit{leaves}), possibly of varying dimension.  
In some texts the term foliation describes what we shall call {here} a \textit{regular foliation}, i.e., a foliation 
whose leaves have all the same dimension $d$,
and which 
admits locally around each point a coordinate trivialization of the form $\{\omega+p\mid p\in P\}$
for some connected submanifold $\omega\subset \R^n$ of dimension $d$ 
and some connected submanifold $P$ of dimension $n-d$ transversal to $\omega$. 

{Another class of foliations of interest for our discussion is that of \emph{integral}
foliations, i.e., foliations such that for every $x$ in $X$  
every vector $v\in T_x X$ tangent to the leaf passing through $x$
can be extended to a smooth vector field {everywhere} tangent to 
the leafs of the foliation. 
The fact that foliations arising as orbits of control systems as in \eqref{eqn:system} are integral }
is a consequence of the orbit theorem.  
Conversely, any {integral} 
foliation is the orbit partition of a control system.
For this, it suffices to take as family of admissible vector fields 
the collection of all vector fields that are everywhere tangent to the leaves.

\medskip

For any family $\mathcal{G}$ of vector fields, we denote by $D^{\mathcal{G}}$ the distribution defined by $\mathcal{G}${, that is,} 
	\[
	D^{\mathcal{G}}=\{D_{x}^{\mathcal{G}} \mid  x\in X\}, \qquad D_{x}^{\mathcal{G}}=\spa
	\{f(x) \mid  f\in \mathcal{G}\}\quad \forall~ x\in X.
	\]
The distribution $D^{\mathcal{G}}$ may not be a sub-bundle of $TM$.
Now, let $\Lie\mathcal{F}$ be the smallest Lie algebra of vector fields containing $\mathcal{F}$, i.e., 
	\[
	\Lie\mathcal{F}=\spa\big\{[f_{1}, \dots[f_{k-1},f_{k}]\dots] \mid  k\in\N,\ f_{1}, \dots, f_{k}\in \F\big\}.
	\] 
Since the vector $[f_{i}, f_{j}](x)$ is tangent to the trajectory $t\mapsto e^{-\sqrt{t}f_{j}}\circ e^{-\sqrt{t}f_{i}}\circ e^{\sqrt{t}f_{j}}\circ e^{\sqrt{t}f_{i}}(x)$, one has that 
	\begin{equation}\label{eq:lie-inclusion}
	D^{\Lie\F}_{y}\subset T_{y}\Or_{x}, \qquad \forall ~ x\in X, ~ y \in \Or_{x}.
	\end{equation}
Under some rather general hypothesis, the inclusion in \eqref{eq:lie-inclusion} turns out to be an equality, i.e., 
	\begin{equation}\tag{LD}\label{eqn:lie-equality} 
	D^{\Lie\F}=T\Or.
	\end{equation}
If this is the case, we say that   \eqref{eqn:system} is \emph{Lie-determined}.
For example, any of the following classical hypotheses imply that system \eqref{eqn:system} is Lie-determined:
\begin{enumerate}[(i)]
\item  the family {$
\F$} is analytic \cite{Nagano1966LinearDS}; 
\item $\Lie \F$ is locally finitely generated \cite{10.2307/24900924};
\item  $D^{\F}$ is $\F$-invariant \cite{10.2307/1996660} \cite{10.1112/plms/s3-29.4.699};
\item  $D^{\Lie \F}$ is a constant rank distribution \cite{frobenius1877ueber}. In this case the foliation described by the orbits is regular.
\end{enumerate} 
For a short introduction {to} 
the subject we refer to \cite{Lavau_2018}. 
Notice that {our bilinear system} \eqref{eqn:control} is Lie-determined since the vector fields $f_{M}$ are analytic. 
Our proof {uses} 
the following classical 
result about attainable sets. It says that, {in the Lie-determined case},
orbits cannot be of a greater dimension than attainable sets.

\begin{theorem}[Krener's theorem \cite{krener1974generalization}]
Assume that \eqref{eqn:system} is Lie-determined.
Then, for every $x$ in $X$, the attainable set $\mathcal{A}_{x}$ has a nonempty interior in the orbit $\Or_{x}$. 
\end{theorem}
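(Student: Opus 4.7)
The plan is to follow Krener's classical argument: build inside $\A_{x}$ a smooth submanifold of $\Or_{x}$ of maximal possible dimension and then show, using the Lie-determined hypothesis, that this dimension must equal $\dim \Or_{x}$. Concretely, let $d=\dim\Or_{x}$ and define $k^{*}$ as the largest integer $k\ge 0$ for which there exist $y\in\A_{x}$, vector fields $f_{1},\dots,f_{k}\in\F$, and positive times $\bar t_{1},\dots,\bar t_{k}>0$ such that the map
\[
\Phi:(t_{1},\dots,t_{k})\longmapsto e^{t_{1}f_{1}}\circ\cdots\circ e^{t_{k}f_{k}}(y)
\]
has rank $k$ at $\bar t=(\bar t_{1},\dots,\bar t_{k})$. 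Since $\Phi$ takes values in $\Or_{x}$, one has $k^{*}\le d$; the goal is to prove $k^{*}=d$, because then the inverse function theorem applied to $\Phi$ produces an open neighborhood of $\Phi(\bar t)$ in $\Or_{x}$ that, thanks to the positivity of the times, lies in $\A_{x}$.

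Suppose by contradiction that $k^{*}<d$, and fix a realizing datum $(y,f_{1},\dots,f_{k^{*}},\bar t)$. After shrinking, the image of $\Phi$ near $\bar t$ is an embedded $k^{*}$-dimensional submanifold $N\subset\A_{x}$. The key claim is that every $f_{0}\in\F$ is tangent to $N$. Otherwise, pick $z\in N$ with $f_{0}(z)\notin T_{z}N$, write $z=\Phi(\bar s)$ for some $\bar s$ with strictly positive entries, and consider the augmented map
\[
\tilde\Phi(t_{0},t_{1},\dots,t_{k^{*}})=e^{t_{0}f_{0}}\circ e^{t_{1}f_{1}}\circ\cdots\circ e^{t_{k^{*}}f_{k^{*}}}(y).
\]
Its derivative at $(0,\bar s)$ is spanned by $f_{0}(z)$ together with the image of $d\Phi_{\bar s}$, hence has rank $k^{*}+1$. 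By lower semicontinuity of the rank, this persists at some $(\bar t_{0},\bar s)$ with $\bar t_{0}>0$, contradicting the maximality of $k^{*}$ applied with $k=k^{*}+1$.

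Next, the standard fact that the Lie bracket of vector fields tangent to a submanifold is itself tangent, iterated, yields $D^{\Lie\F}_{z}\subset T_{z}N$ for every $z\in N$. By the Lie-determined hypothesis \eqref{eqn:lie-equality} this gives $T_{z}\Or_{x}=D^{\Lie\F}_{z}\subset T_{z}N$, and comparing dimensions yields $d\le k^{*}$, contradicting $k^{*}<d$. Hence $k^{*}=d$ and the inverse function theorem finishes the argument. The subtle point is the tangency step: one must exploit the strict positivity $t_{i}>0$ by using lower semicontinuity of the rank to promote a rank-$(k^{*}+1)$ configuration at $t_{0}=0$ to a nearby one with $t_{0}>0$, without which one would only conclude that the orbit (and not the attainable set) is reached.
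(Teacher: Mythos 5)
Your argument is correct, but note that the paper does not prove this statement at all: it is quoted as a classical result with a citation to Krener's original article (and to \cite{book} for its consequences), so there is no internal proof to compare against. What you have written is the standard textbook proof (the one in Krener's paper and in Agrachev--Sachkov/Jurdjevic): maximize the rank of compositions of flows with strictly positive times, show by the lower-semicontinuity-of-rank trick that maximality forces every $f\in\F$ to be tangent to the resulting immersed piece $N\subset\A_x$, propagate tangency to iterated brackets, and invoke \eqref{eqn:lie-equality} to force $\dim N=\dim\Or_x$, after which the inverse function theorem gives an open subset of $\Or_x$ inside $\A_x$. The steps are all sound, including the key point you flag (promoting the rank-$(k^*+1)$ configuration from $t_0=0$ to $t_0>0$, which is exactly what distinguishes the attainable set from the orbit). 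Two small points you use implicitly and could make explicit: the bound $k^*\le d$ and the final application of the inverse function theorem both require that $\Phi$ be smooth as a map into $\Or_x$ equipped with its immersed-manifold structure (equivalently, that the partial derivatives $\bigl(e^{t_1f_1}_*\cdots e^{t_{j-1}f_{j-1}}_*f_j\bigr)(\Phi(t))$ lie in $T\Or_x$), which is supplied by the orbit theorem; and in the degenerate case $k^*=0$ the tangency claim reduces to all fields of $\F$ vanishing at $y$, which your bracket argument still handles. Neither affects correctness; your proof fills in a result the paper deliberately leaves to the literature.
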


The following lemma provides some informations about 
the orbits of an approximately controllable system.

\begin{lemma} \label{lem:existance-foliation}
Assume that system \eqref{eqn:system} is approximately controllable. Then, the orbits of \eqref{eqn:system} form a regular foliation of $X$ with dense leaves.
\end{lemma}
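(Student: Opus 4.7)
The plan is to deduce, in order, that every orbit is dense, that all orbits share a common dimension, and finally that the resulting partition admits the local product charts required by the definition of regular foliation.

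For density, it suffices to observe that approximate controllability gives $\cl(\A_x) = X$ for every $x \in X$. Since $\A_x \subset \Or_x$, we immediately obtain $\cl(\Or_x) = X$, so every orbit is dense.

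For constancy of dimension, I would exploit lower semicontinuity of $y \mapsto \dim \Or_y$. By Theorem~\ref{thm:orbit}, each tangent space $T_y \Or_y$ is the span of pushforward vectors $(e^{t_1 f_1}_* \circ \cdots \circ e^{t_k f_k}_* g)(y)$, which depend smoothly on $y$. Set $d^* := \sup_{y\in X} \dim \Or_y$ (attained since $d^* \le n$). Picking $d^*$ such pushforward vectors that are linearly independent at some $x_0$ with $\dim \Or_{x_0} = d^*$, openness of linear independence yields that $U := \{y : \dim \Or_y = d^*\}$ is open and nonempty. Now for any $z \in X$, density of $\Or_z$ gives a point $z' \in \Or_z \cap U$; since $\Or_z = \Or_{z'}$, all orbits have dimension $d^*$.

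For regularity, I would fix $x \in X$ and choose $d^*$ pushforward vector fields $Y_1, \ldots, Y_{d^*}$ linearly independent at $x$, hence in a neighborhood $V$ of $x$. Because $\dim T_y \Or_y = d^*$ throughout $V$ and each $Y_j(y)$ lies in $T_y \Or_y$, these fields span $T_y \Or_y$ on $V$. Thus $y \mapsto T_y \Or_y$ is a smooth rank-$d^*$ distribution $D$ on $V$. Since the orbit through each $y \in V$ is an immersed submanifold of dimension $d^*$ tangent to $D$, any smooth sections of $D$ restrict to vector fields on each orbit and their bracket remains tangent to it; hence $D$ is involutive. Frobenius's theorem then furnishes coordinates on a possibly smaller neighborhood in which the leaves read as $\{\omega + p \mid p \in P\}$, as required.

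The main obstacle is the third step: upgrading the Stefan--Sussmann partition into immersed submanifolds supplied by Theorem~\ref{thm:orbit} to a genuinely regular foliation in the sense defined above. This upgrade rests on producing a smooth involutive distribution of constant rank, and it is precisely the constancy of the orbit dimension---established in the second step via density---that makes this possible.
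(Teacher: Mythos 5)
Your proposal is correct and follows essentially the same route as the paper: density of orbits from $\A_x\subset\Or_x$, constancy of the orbit dimension by combining lower semicontinuity of $y\mapsto\dim\Or_y$ (via the tangent-space formula of Theorem~\ref{thm:orbit}) with density, and then regularity from the constant-rank situation. Your third step merely spells out, via an explicit involutive distribution and Frobenius, what the paper compresses into the remark that an integral foliation of constant rank is regular.
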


\begin{proof} 
{As noticed 
above,} 
the orbits of system \eqref{eqn:system} form a partition of $X$ in immersed submanifolds. 
Since the attainable sets are contained in the orbits, the approximate controllability implies that the orbits are dense. 
Finally, due to the expression  of the tangent space of the orbits in Theorem \ref{thm:orbit}, the dimension of the orbits is lower semi-continuous, i.e., for all $x$ in $X$, there exists a neighbourhood $V(x)$ of $x$ such that 
	\[
	\dim \Or_{x}\leq \dim \Or_{y},\qquad \forall ~ y \in V(x).
	\]
Now let $\Or_{x}$ be {an} 
orbit of maximal dimension; since $\Or_{x}$ is dense, all  other orbits have the same dimension as $\Or_{x}$.
Finally, an {integral}
foliation of constant rank is a regular foliation.
\end{proof}

The family $\F$ is said to satisfy the \emph{Lie algebra rank condition at $x\in X$} 
 if the evaluation  at $x$ of the Lie algebra generated by $\F$  has maximal dimension, i.e., $ D^{\Lie\mathcal{F}}_{x}=T_x X$.

\begin{corollary}\label{cor:existance-foli}
Assume that {system \eqref{eqn:system} is 
Lie-determined and approximately controllable}.
Then, exactly one of the following holds:
\begin{enumerate}[\it (a)]
\item\label{it:cont} $\F$ satisfies the Lie algebra rank condition at all points in $X$; thus, system \eqref{eqn:system} is controllable.
\item\label{it:foli} There exists an integer $k$ with $0<k<\dim X$ such that 
the orbits of \eqref{eqn:system} form a regular $k$-dimensional foliation of $X$ with dense leaves.
\end{enumerate}
\end{corollary}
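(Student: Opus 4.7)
My plan is to reduce everything to Lemma \ref{lem:existance-foliation}, which already supplies a regular foliation of $X$ with dense leaves of some common dimension $k$; the dichotomy in the corollary is then nothing more than the dichotomy $k = n$ versus $0 < k < n$, where $n := \dim X$. The only preliminary observation is that $k \geq 1$: a connected $0$-dimensional submanifold is a single point, which cannot be dense in $X$ (assuming $\dim X \geq 1$). The two cases are therefore mutually exclusive and exhaustive.

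If $0 < k < n$, we are immediately in case \eqref{it:foli}. If $k = n$, every leaf is a full-dimensional immersed submanifold, hence open in $X$. Approximate controllability forces $X$ to be connected, since an orbit confined to one connected component could not be dense in all of $X$, so a partition of $X$ into open leaves collapses to a single leaf: $\Or_x = X$ for every $x$. The Lie-determined assumption then yields $D^{\Lie \F}_x = T_x \Or_x = T_x X$ at every point, which is precisely LARC.

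To upgrade LARC to exact controllability (the ``thus'' in \eqref{it:cont}), the plan is to apply Krener's theorem in both time directions. Replacing $\F$ by $-\F := \{-f \mid f \in \F\}$ preserves the Lie algebra, because $[-f,-g] = [f,g]$, and leaves the orbits unchanged, since those already use flows at times of both signs. Hence the time-reversed system is also Lie-determined and satisfies LARC, so Krener gives a nonempty interior both for the forward attainable set $\A_x$ and for the backward attainable set $\A^-_y := \{z \in X \mid y \in \A_z\}$. The density of $\A_x$ (from approximate controllability) together with the nonempty interior of $\A^-_y$ forces $\A_x \cap \A^-_y \neq \emptyset$, and any $z$ in this intersection produces a trajectory $x \to z \to y$, so $y \in \A_x$; since $y$ is arbitrary, $\A_x = X$.

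The only subtle step is this last controllability argument; specifically, verifying that the time-reversed system remains Lie-determined with LARC so that Krener may be invoked in both directions. Everything else follows directly from Lemma \ref{lem:existance-foliation}, the orbit theorem, and a single connectedness observation.
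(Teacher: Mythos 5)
Your proof is correct, and it rests on the same three ingredients as the paper (Lemma~\ref{lem:existance-foliation}, Lie-determinedness, Krener's theorem), but it organizes the dichotomy differently. The paper splits on whether $\F$ satisfies the Lie algebra rank condition at \emph{some} point: if so, Krener's theorem gives $\A_x$ nonempty interior, approximate controllability makes every attainable set meet $\A_x$, hence there is a single orbit, then \eqref{eqn:lie-equality} upgrades this to the rank condition everywhere and controllability is quoted from \cite[Cor.~8.3]{book}; Lemma~\ref{lem:existance-foliation} is invoked only in the complementary case. You instead apply Lemma~\ref{lem:existance-foliation} unconditionally and split on the common leaf dimension $k$: for $k=\dim X$ you obtain the single orbit topologically (open leaves partitioning a connected space, with connectedness of $X$ itself extracted from approximate controllability), and you re-prove the quoted controllability step by running Krener forwards and backwards, using that $\Lie(-\F)=\Lie\F$ and that orbits are insensitive to time reversal, so the reversed system is again Lie-determined with full rank. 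What each buys: the paper's version is shorter because it delegates the final step to the literature, while yours is self-contained and makes explicit two small facts the paper leaves tacit ($k\ge 1$ and connectedness of $X$). One minor point: to justify ``exactly one'' you should also observe that case \ref{it:cont} forces the orbits to be full-dimensional via the inclusion \eqref{eq:lie-inclusion}, so \ref{it:cont} and \ref{it:foli} cannot hold simultaneously; this is immediate, and the paper leaves it implicit as well.
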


\begin{proof}
If there exists a point $x$ in $X$ at which $\F$ satisfies  the Lie algebra rank condition, then \eqref{eqn:system} has a single
orbit. Indeed, Krener's theorem implies that the interior of $\A_{x}$ is nonempty. Hence,  due to the approximate controllability assumption, the attainable set from any other point intersects $\A_{x}$, and therefore is contained in $\Or_x$. 
Thus, \eqref{eqn:system} has a single orbit and, due to 
{the Lie-determinedness property} 
\eqref{eqn:lie-equality}, every point in $X$ satisfies the Lie algebra rank condition. The controllability  follows 
{as a corollary of
Krener's theorem 
(}\cite[Corollary~8.3]{book}).

Otherwise, assume that $\F$ does not satisfy the Lie algebra rank condition at any point. Then, Lemma~\ref{lem:existance-foliation} shows that the orbits of \eqref{eqn:system} form a regular foliation, whose leaves are dense since they contain the attainable sets. 
Finally,  the dimension of the orbits is less than the dimension of $X$, otherwise $\F$ would satisfy the Lie algebra rank condition {due to \eqref{eqn:lie-equality}}.
\end{proof}

\begin{remark}
Corollary \ref{cor:existance-foli} is useful if one can exclude the existence of a foliation with the properties described in \ref{it:foli}; this might be possible thanks to 
the particular form of system \eqref{eqn:system} or 
some topological properties of $X$. Most of the results in this direction are for codimension-one {regular} foliations: for example, it is known that even dimensional spheres do not admit codimension-one {regular}  foliations \cite{10.2307/1970795}. We recall also that a compact manifold with finite fundamental group has no analytic {regular} foliations of codimension one \cite{Haefliger1957/58}; some additional results can be found in \cite{lawson74}.
\end{remark}

If we drop the hypothesis that the system is Lie-determined, it is possible to construct an approximately controllable and not controllable system having only one orbit.
This is shown in the following example.
\begin{example}\label{ex:non-c}
Let $X=\R^{2}$ and consider the family $\F=\{f_{1},~f_{2},~f_{3}^{+},~f_{3}^{-}\}$ with
	\[
	f_{1}=\frac{\partial}{\partial y}, \quad f_{2}=-\phi(x,y)f_{1}, \quad f_{3}^{\pm}=\pm \phi(x,y)\frac{\partial}{\partial x},
	\] 
where $\phi:\R^{2}\to [0,+\infty)$ is a smooth function  such that, for all $(x,y)\in \R^{2}$, $\phi(x,y)=0$ if and only if  $x=0$ and $y\leq 0$. 
This four vector fields are illustrated in  Figure~\ref{img:figure-example}.
It is not hard to {check} 
that this system has a single orbit, is approximately controllable, {and still it }
is not controllable. 
\begin{figure}[]
\begin{center}
\includegraphics[width=0.45\linewidth]{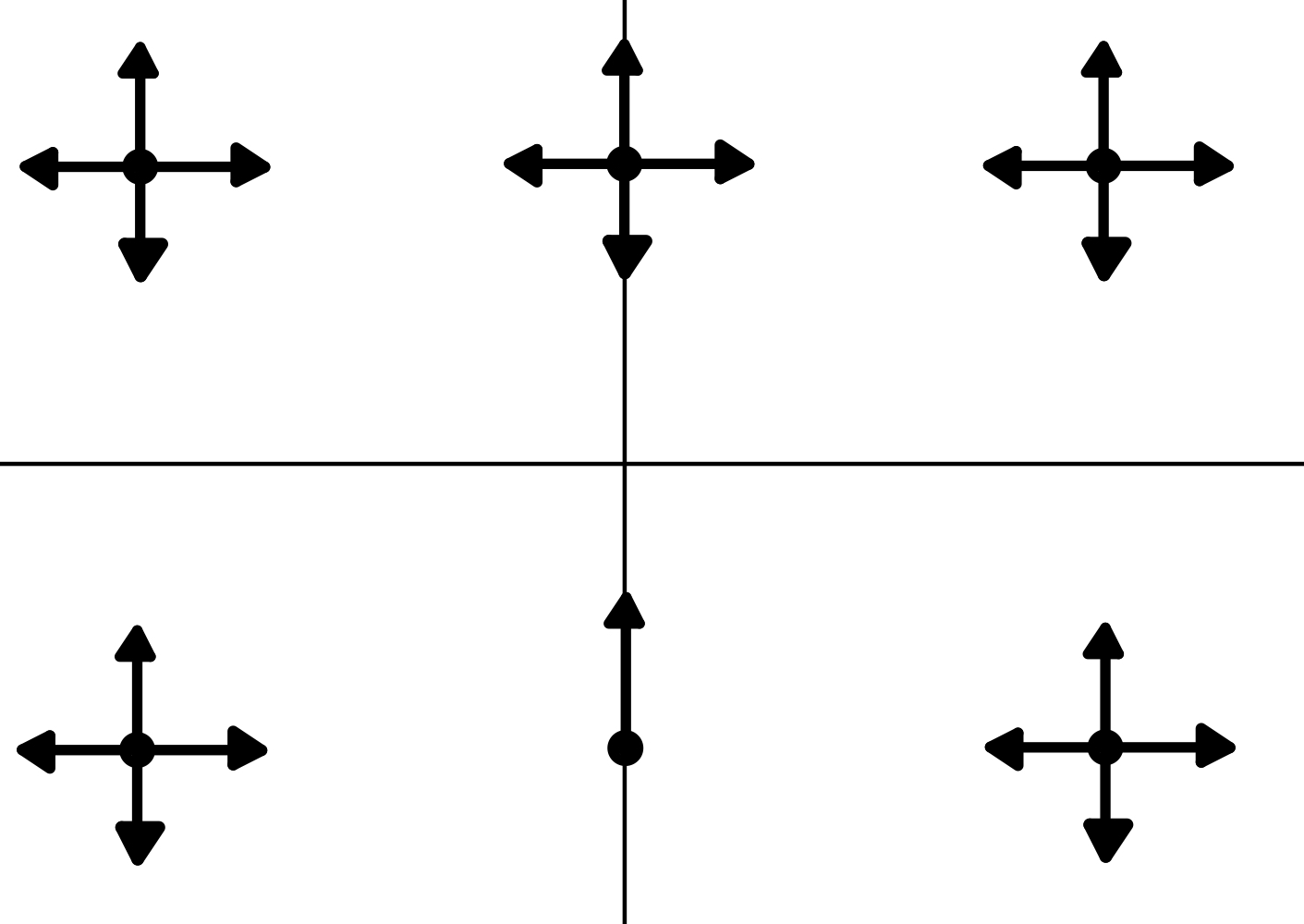}
\caption{\label{img:figure-example}\small The admissible vector fields 
for the control system 
introduced in 
Example \ref{ex:non-c}.}
\end{center}
\end{figure}
\end{example}

\section{Proof of Theorem \ref{thm:conjecture-1}}\label{sec:3}

Assume that system \eqref{eqn:control} is approximately controllable and that $n\geq 2$, the case $n=1$ being trivial.
 As already remarked, 
system~\eqref{eqn:control} is Lie-determined due to the analyticity of each linear vector field. Thus, Corollary \ref{cor:existance-foli} applies: either system \eqref{eqn:control} is controllable, or the partition of $\R^{n}\mysetminus\{0\}$ into the orbits of \eqref{eqn:control} forms a regular foliation of positive codimension as described in \ref{it:foli}. Let us assume the latter, and show that this leads to a contradiction.

Let us introduce two systems which can be naturally {associated with} 
 \eqref{eqn:control}: the projections of system  \eqref{eqn:control} onto $S^{n-1}$ and 
$ \Proj^{n-1}$.
First, consider the projection $\pi:\R^{n}\mysetminus \{0\}\to S^{n-1}$ defined by $\pi(x)=x/|x|$.
For every $x$ in $S^{n-1}
$ {and every $v\in T_{x}\R^{n}$}, consider the pushforward $\pi_{*}(v)= v -  \langle x,v\rangle x$,
and let us introduce the
control system
	\begin{equation} \tag{S$\Sigma$} \label{eqn:control-sphere}
  	\dot x = \pi_{*}(M(t) x), \qquad  
	M(t)\in \mathcal{M},  ~ x\in  S^{n-1}.
	\end{equation}
Due to homogeneity, the  trajectories of \eqref{eqn:control-sphere} are the image of the trajectories of \eqref{eqn:control} via $\pi$; thus, the orbits $\Or^{S}$ of \eqref{eqn:control-sphere} are projections of the orbits of \eqref{eqn:control}, that is 
 	\begin{equation}\label{eq:orbit-projection}
	\Or_{\pi (y)}^{S} = \pi \big(\Or_{y}\big), \qquad \forall y \in \R^{n}\mysetminus \{0\}.\end{equation}
We say that \eqref{eqn:control} is \emph{angularly controllable} if \eqref{eqn:control-sphere} is controllable.
Similarly, consider the canonical projection $\omega:\R^{n}\mysetminus\{0\}\to \Proj^{n-1}$ and the system
\begin{equation}
  \tag{$\mathbb{P}\Sigma$} \label{eqn:control-proj}
  \dot q = \omega_{*}( M(t)x), \qquad   
  M(t)\in \mathcal{M},  ~  q=\omega(x)\in \Proj^{n-1}.
\end{equation}
This system is well-defined because $\omega_{*}( M(t)x)$ depends only on $q$ and not on the
choice of the specific $x\in\R^n\mysetminus\{0\}$ such that $q=\omega(x)$.
As mentioned in the introduction, Theorem~\ref{thm:conjecture-1} holds if one replaces  \eqref{eqn:control} by \eqref{eqn:control-proj}, as shown in \cite[Proposition~44]{Boarotto_2020}. 
Using this result, let us deduce the following property. 

\begin{lemma}\label{lem:implies-angul-control}
	If \eqref{eqn:control} is approximately controllable then \eqref{eqn:control} is angularly controllable.
\end{lemma}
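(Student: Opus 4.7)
The plan is to pass through the projective system \eqref{eqn:control-proj}, to which \cite[Prop.~44]{Boarotto_2020} applies directly, and then to lift the resulting controllability on $\Proj^{n-1}$ back to the sphere. First, I would check that approximate controllability of \eqref{eqn:control} projects immediately to approximate controllability of both \eqref{eqn:control-sphere} and \eqref{eqn:control-proj}: since $\pi$ and $\omega$ are continuous and map trajectories to trajectories, the density of each $\A_x$ in $\R^n$ makes the attainable sets of \eqref{eqn:control-sphere} and \eqref{eqn:control-proj} dense in $S^{n-1}$ and $\Proj^{n-1}$ respectively. The cited proposition then upgrades the projective statement to full controllability of \eqref{eqn:control-proj}.

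Next, I would apply Corollary~\ref{cor:existance-foli} to \eqref{eqn:control-sphere}, which is legitimate since the vector fields $x\mapsto\pi_{*}(Mx)$ are analytic (hence the system is Lie-determined) and approximately controllable. This yields a dichotomy: either \eqref{eqn:control-sphere} is controllable---which is exactly angular controllability, finishing the proof---or its orbits form a regular foliation of $S^{n-1}$ of some constant dimension $k$ with $0<k<n-1$ and dense leaves. I would then rule out the foliation alternative by exploiting the double cover $p\colon S^{n-1}\to\Proj^{n-1}$, which intertwines sphere and projective trajectories, so that $p(\Or^S_y)$ coincides with the projective orbit of $p(y)$, hence equals $\Proj^{n-1}$ by the controllability just established. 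This forces
\[
\Or^S_y\cup(-\Or^S_y)=p^{-1}(p(\Or^S_y))=p^{-1}(\Proj^{n-1})=S^{n-1}.
\]

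In the foliation alternative, however, $\Or^S_y$ and $-\Or^S_y$ are immersed submanifolds of $S^{n-1}$ of dimension $k\le n-2$; each is a countable union of embedded $k$-manifolds and therefore has $(n-1)$-dimensional Hausdorff measure zero, so their union cannot cover $S^{n-1}$. This contradiction leaves only the controllable alternative. The step I expect to require the most care is this last one: one must verify that case (b) of Corollary~\ref{cor:existance-foli} really forces the strict inequality $k<n-1$ (so the dimension/measure argument bites) and that $p$ carries sphere orbits onto projective orbits without subtlety---both follow cleanly because $p$ is a local diffeomorphism conjugating trajectories of the two systems, but they deserve to be spelled out.
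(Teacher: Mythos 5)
Your proposal is correct, and its first half (pushing approximate controllability down to \eqref{eqn:control-proj} and invoking \cite[Prop.~44]{Boarotto_2020}) is exactly the paper's argument. Where you genuinely diverge is in returning from $\Proj^{n-1}$ to $S^{n-1}$: the paper simply cites the equivalence of controllability of \eqref{eqn:control-sphere} and \eqref{eqn:control-proj} established in \cite{article}, whereas you re-prove the (weaker, but sufficient) implication yourself by applying Corollary~\ref{cor:existance-foli} to the sphere system --- legitimate, since the projected vector fields $x\mapsto Mx-\langle x,Mx\rangle x$ are analytic on $S^{n-1}$, hence the system is Lie-determined, and approximate controllability does pass to the projection --- and then excluding alternative \ref{it:foli} via the double cover $p\colon S^{n-1}\to\Proj^{n-1}$: since $p$ intertwines the two systems and their flows lift, $p(\Or^{S}_{y})$ is the full projective orbit, so $\Or^{S}_{y}\cup(-\Or^{S}_{y})=S^{n-1}$, which is impossible for two immersed submanifolds of dimension $k\le n-2$ (the measure-zero step uses that leaves of a regular foliation are second countable, hence countable unions of embedded $k$-disks; this is standard but deserves the sentence you anticipate). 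Your route buys independence from the citation \cite{article}, making the lemma self-contained modulo \cite[Prop.~44]{Boarotto_2020}, and it stays in the foliation-theoretic spirit of the rest of the paper; the paper's route is shorter and records the full equivalence between the spherical and projective systems rather than only the direction needed here.
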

\begin{proof}
Since the projection of the trajectories of \eqref{eqn:control} are trajectories of \eqref{eqn:control-proj}, if the former is approximately controllable then the same holds for the latter. 
Due to \cite[Proposition~44]{Boarotto_2020}, if \eqref{eqn:control-proj} is approximately controllable then it is controllable. 
In \cite[Theorem~1]{article} the authors show that system \eqref{eqn:control-proj} is controllable if and only if system \eqref{eqn:control-sphere} is controllable. 
Therefore, system \eqref{eqn:control-sphere}  is controllable, meaning that system \eqref{eqn:control} is angularly controllable.
\end{proof}

Let us denote by $\Or = \{\Or_{x} \mid x \in \R^{n}\mysetminus\{0\}\}$ the orbit partition of system \eqref{eqn:control}.  Due to \eqref{eq:orbit-projection} and  Lemma~\ref{lem:implies-angul-control}, one has
	\[
	\pi_{*} \big(T_{y}\Or_{x}\big) = T_{\pi(y)}\Or^{S}_{\pi(x)}=T_{\pi(y)}S^{n-1}, 
	\qquad \forall ~x \in \R^{n}\mysetminus\{0\}, ~\forall~y \in \Or_{x}.
	\]
Therefore, we have that
	\begin{equation} \label{eqn:transverse-lie}
	T_{y}\Or_{x}+\R y = \R^{n}, \qquad \forall~ x \in \R^{n}\mysetminus\{0\}, ~\forall~y \in \Or_{x},
	\end{equation}
which is to say that the orbits are transversal to the radial direction. 
Additionally, since we assumed to be in case \ref{it:foli} of Corollary \ref{cor:existance-foli}, this implies that 
$\dim T_{y}\Or_{x}= n-1$. 
It follows that $\Or$ is a codimension-one regular foliation of $\R^{n}\mysetminus\{0\}$ transversal to the radial direction and with dense leaves.
{Moreover $\Or$ is \textit{homogenous} in the sense that, for all $\lambda>0$ and $x$ in $\R^{n}\mysetminus\{0\}$, one has that $\lambda\Or_{x}=\Or_{\lambda x}$.}
In the following lemma we show that such a foliation cannot exist.

\begin{lemma} \label{lem:existence-larc} 
	Assume that $n\geq 2$.
	Then, there does not exist a homogenous, codimension-one regular foliation of $\R^{n}\mysetminus\{0\}$ transversal to the radial direction and with dense leaves.
\end{lemma}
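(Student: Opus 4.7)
My plan is to exploit the homogeneity of the foliation to reduce the statement to a question about closed 1-forms on $S^{n-1}$. The first step is to introduce the diffeomorphism $y\mapsto (q,s):=(y/|y|,\log|y|)$ identifying $\R^{n}\mysetminus\{0\}$ with $S^{n-1}\times\R$. Under this identification the radial (Euler) vector field becomes $\partial/\partial s$, the scaling action $y\mapsto\lambda y$ becomes translation in the $s$-variable, and the hypotheses that the putative foliation is homogeneous and transversal to the radial direction become, respectively, invariance of its tangent distribution under $s$-translations and transversality to $\partial/\partial s$.

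Next, because the foliation is regular of codimension one and transversal to $\partial/\partial s$, one obtains a \emph{global} defining 1-form by normalizing local defining 1-forms so that they evaluate to $1$ on $\partial/\partial s$: two such local forms have the same kernel and hence agree after the normalization. The resulting global form has the shape $\omega = ds - \beta$, where $\beta$ annihilates $\partial/\partial s$. Homogeneity forces $\beta$ to be invariant under $s$-translations, so $\beta$ is the pullback of a 1-form $\tilde\beta$ on $S^{n-1}$. A short computation separating the $ds$-part and the purely spherical part of $\omega\wedge d\omega$ reduces the Frobenius integrability condition to $d\tilde\beta=0$, so $\tilde\beta$ is closed.

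I would then conclude by a cohomological dichotomy. When $n\geq 3$, the sphere $S^{n-1}$ is simply connected, so $\tilde\beta=df$ is exact; the leaves of $\omega=0$ are the graphs $\{(q,f(q)+c):q\in S^{n-1}\}$ for $c\in\R$, each a compact topological copy of $S^{n-1}$, and a partition of $S^{n-1}\times\R$ into pairwise disjoint compact hypersurfaces admits no dense leaf. When $n=2$, one has $\tilde\beta=\alpha\,d\theta+df$ for some $\alpha\in\R$ and $f\in C^{\infty}(S^{1})$; if $\alpha=0$ the leaves are again compact circles, while if $\alpha\neq 0$ each leaf meets every fiber $\{q\}\times\R$ in a discrete arithmetic progression of common step $2\pi\alpha$, so it is a properly embedded copy of $\R$, hence closed in the cylinder and not dense. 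Either way the density hypothesis is contradicted.

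I expect the only delicate point in this plan to be the globalization step that produces $\omega=ds-\beta$ with $\beta$ pulled back from $S^{n-1}$: one must verify that the local normalization is genuinely canonical, that the resulting global form is smooth, and that homogeneity survives this normalization. Once this reduction is in place, the Frobenius computation and the cohomological dichotomy are essentially routine.
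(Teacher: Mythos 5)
Your proof is correct, and it takes a genuinely different route from the one in the paper. The paper argues by planar sections: for $n=2$ it invokes Whitney's theorem together with the non-existence of flows with dense orbits on $S^{2}$ minus finitely many points, and for $n\geq 3$ it slices the foliation by the planes $\mathcal{P}_{\theta}=\spa\{p,\theta\}$, shows that the first-return point $p_{\theta}$ on the ray $-\R_{>0}p$ depends continuously on $\theta\in S^{n-2}$ with values in a set of empty interior (hence is constant), and assembles the arcs $C_{\theta}$ into a topological sphere that is both open and closed in the leaf $L_{p}$, contradicting density. You instead pass to the cylinder $S^{n-1}\times\R$, where homogeneity becomes $s$-translation invariance, extract the canonical defining form $\omega=ds-\pi^{*}\tilde\beta$ (transversality supplies both the co-orientability needed for a global form and the normalization $\omega(\partial/\partial s)=1$), and reduce everything to $H^{1}(S^{n-1})$ via the Frobenius identity $\omega\wedge d\omega=0$. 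The globalization step you flag as delicate does go through exactly as you describe: two local defining forms with the same kernel agree after normalization against $\partial/\partial s$, and the resulting unique global form is automatically translation-invariant because the normalization is canonical, so $\beta$ descends to $S^{n-1}$. Your route buys more than the paper's: it classifies all such foliations (graphs of smooth functions over $S^{n-1}$ when $n\geq 3$; circles or properly embedded spirals when $n=2$), treats the two cases nearly uniformly, and replaces the planar ODE input by elementary cohomology of spheres. The only hypothesis you use beyond the statement is that the tangent distribution is at least $C^{1}$ so that $d\omega$ makes sense; this is harmless, since the lemma is applied to the orbit foliation of an analytic system, and the paper's own proof needs comparable regularity for Whitney's theorem and for the integral curves of the line fields $I_{\theta}$.
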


The hypothesis  of transversality between the foliation and the radial direction is necessary, as counterexamples can be constructed otherwise. 
For instance, in \cite{AIF_1976__26_1_239_0} the author presents an example of codimension-one regular foliation of $\R^{3}$ with dense leaves; this construction is presented with additional details in  \cite[Chapter~4]{camacho2013geometric}.

\begin{proof}[Proof of Lemma~\ref{lem:existence-larc}] 
	By contradiction, suppose  there exists a codimension-one regular  foliation $\mathcal{L}=\{L_{\alpha}\mid \alpha \in A\}$ 
	 of $\R^{n}\mysetminus\{0\}$ with dense leaves transversal to the radial direction.
	
Let us first consider the case $n= 2$. 
Orienting the foliation using the clockwise direction and applying Whitney's theorem (see  \cite[Theorem~2.3 at p.~23]{ABZintrod}), the foliation can be identified with the set of trajectories
of a vector field. 
Using the stereographic projection, the flow of such a vector field can be pushed to the sphere $S^{2}$ minus two points.
However, a flow with dense trajectories on $S^{2}$ minus finitely many points
 does not exist: see, for instance, \cite[Lemma~2.4 at p.~56]{ABZintrod}.

\begin{figure}[]
\begin{tikzpicture}
    \draw (0, 0) node[inner sep=0] {\includegraphics[width=0.35\linewidth]{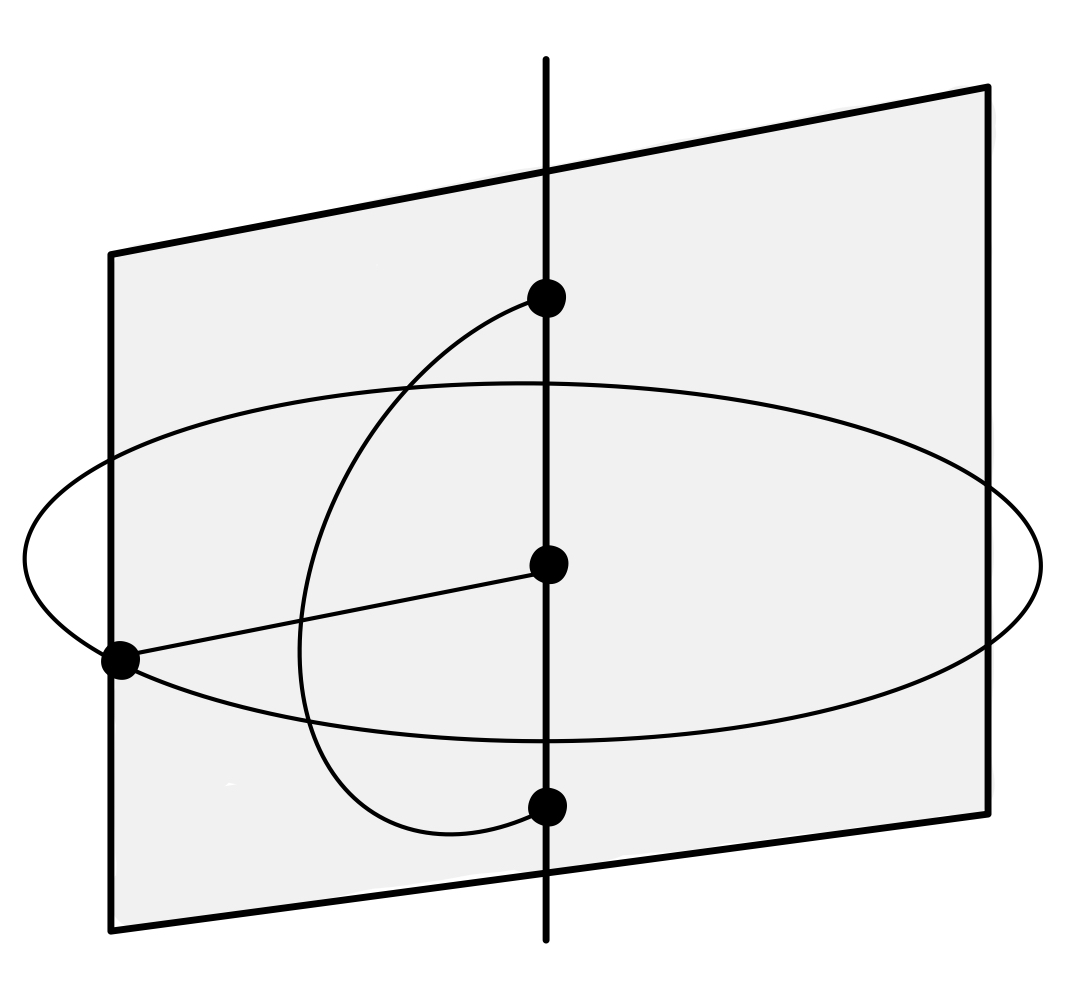}};
    \draw (0.4, 1) node {$p$};
    \draw (-2.2,-1) node {$\theta$};
    \draw (-1.2,0) node {$C_{\theta}$};
    \draw (0.4, -1.4) node {$p_{\theta}$};
    \draw (1.8, 1.5) node {$\mathcal{P}_{\theta}$};
    \draw (2.8, -0.5) node {$S^{n-2}$};
\end{tikzpicture} 
	\caption{\label{img:proof-lie-max-gen}}
\end{figure}

	Assume that $n\geq 3$. Let us fix the point $p= (0, \dots, 0, 1)\in \R^{n}$, and denote  by $S^{n-2}$ the embedded sphere $S^{n-2}\times \{0\}\subset\R^{n}$. For every $\theta$ in $S^{n-2}$, let $\mathcal{P}_{\theta}$ be the plane 
	\[\mathcal{P}_{\theta} = \spa\{p,\theta\},\]
	as depicted in Figure~\ref{img:proof-lie-max-gen}.
Because of the transversality between the leaves of $\mathcal{L}$ and 
the radial direction, the linear subbundle $I_{\theta}= \mathcal{P}_{\theta} \cap T\mathcal{L}|_{\mathcal{P}_{\theta}\mysetminus\{0\}}$ is a 1-dimensional distribution on $\mathcal{P}_{\theta}\mysetminus\{0\}$ satisfying 
	\begin{equation}\label{eqn:property-foliation}
	I_{\theta}(x)\oplus \R x = T_{x}\mathcal{P}_{\theta}, \qquad \forall ~ x\in \mathcal{P}_{\theta}\mysetminus\{0\}.
	\end{equation}
By definition, the intersection $L_p\cap \mathcal{P}_{\theta}$ contains the integral curve to $I_{\theta}$ starting at $p$.
The integral curves of $I_{\theta}$ are nothing but the leaves of the 1-dimensional foliation defined by the distribution $I_{\theta}$ in $\mathcal{P}_{\theta}\mysetminus \{0\}$. 
We claim that such a foliation is orientable. Indeed, for each $x$ in $\mathcal{P}_{\theta}\mysetminus\{0\}$, we can say that a nonzero vector $v$ in $I_{\theta}(x)$ has a positive orientation if $(v,x)$ is an oriented frame of $\mathcal{P}_{\theta}$.
It follows 
from the already cited Whitney's theorem
that the foliation defined by $I_{\theta}$ is the orbit partition of $\mathcal{P}_{\theta}\mysetminus\{0\}$ by the flow of a vector field $g_{\theta}$ everywhere transversal to the radial direction. 

Because of the transversality condition and the homogeneity, the flow of $g_{\theta}$
makes a turn  around the origin, in the sense that 
its angular 
component is monotone 
with respect to time 
and the radial 
component
does not 
converge to zero nor diverge in finite time. 
Let us choose the vector field $g_{\theta}$ such that, starting from $p$, one intersects $\R_{>0}\theta$ before $-\R_{>0}\theta$.
Define $p_{\theta}$ to be the point of first intersection between the integral curve of $g_{\theta}$ starting at $p$ and the ray $-\R_{>0}p$, and $C_{\theta}$ to be the arc between $p$ and $p_{\theta}$ (see Figure \ref{img:proof-lie-max-gen}).

Now, let us define the map $\Phi:S^{n-2}\to -\R_{>0}p$ by $\Phi(\theta) = p_{\theta}$.  The map $\Phi$ is well-defined, in the sense that $p_{\theta}$ does not depend on the vector field $g_{\theta}$ (once the latter is chosen with the appropriate orientation). 
Moreover, the map $\Phi$ is continuous, {as} it  follows from the transversality between $I_{\theta}(-p)$ and $-\R_{>0}p$ for all $\theta$ in $S^{n-2}$.
In addition, the image of $\Phi$ is contained in the intersection $L_{p}\cap -\R_{>0}p$, which has empty interior because of the transversality between $T\mathcal{L}$ and the radial direction. 
Since $S^{n-2}$ is connected ($n>2$), it follows that $\Phi$ is constant. 
 Let us define 
 \[S=\bigcup_{\theta\in S^{n-2}}C_{\theta}.\]
 {By the transversality between $I_{\theta}(-p)$ and $-\R_{>0}p$ for all $\theta$ in $S^{n-2}$ it follows that we can parameterize $C_\theta$ as a continuous arc on $[0,1]$  continuously with respect to $\theta$. 
 Hence, the topology of $S$ as a subset of $\R^{n}$ is that of $(S^{n-2} \times [0,1])/ \sim$, where $\sim$ is the equivalence relation  which identifies  the points in $S^{n-2}\times\{0\}$ to a single equivalence class, and analogously for the points in $S^{n-2}\times\{1\}$.
That is, $S$ is 
 } 
 a topological manifold homeomorphic to the sphere $S^{n-1}$. 
In particular, $S\subset L_{p}$ is closed in $\R^{n}$, and therefore it is closed in $L_{p}$.
 Since $S$ has the same topological dimension as $L_p$, we have that $S$ is open in the topology of $L_{p}$. 
 Since $L_p$ is connected, we can conclude that $S= L_{p}$. This is {contradicts} 
 the assumptions that the leaves are dense. 
  \end{proof}

 Lemma \ref{lem:existence-larc}  shows that the supposition that we are in case \ref{it:foli} of Corollary \ref{cor:existance-foli} leads to a contradiction. Therefore, we are in case \ref{it:cont}, and \eqref{eqn:control} is controllable. This concludes the proof of Theorem~\ref{thm:conjecture-1}. 
 \hfill $\square$
\\

The result in Lemma~\ref{lem:existence-larc} implies that Theorem \ref{thm:conjecture-1} generalises for control systems 
which are Lie-determined, homogeneous, and angularly controllable. 
{We say that \eqref{eqn:system} is \emph{homogeneous} if $X=\R^n\setminus\{0\}$ and for every $x\in X$, $u\in U$, and $\lambda>0$,  
$\pi_{*}(f_{u}(\lambda x))=\pi_{*}(f_{u}(x))$, where $\pi:\R^{n}\mysetminus \{0\}\to S^{n-1}$ still denotes the canonical projection. 
Just as in the bilinear case,} 
the projection of such systems on the sphere $S^{n-1}$ is
	\begin{equation} \tag{SC} \label{eqn:control-general-sphere}
  	\dot x = \pi_{*}(f_{u(t)}(x)), \qquad  
	f_{u(t)}\in \mathcal{F},  ~ x\in  S^{n-1},
	\end{equation}
and we say that \eqref{eqn:system} is angularly controllable if \eqref{eqn:control-general-sphere} is controllable.

\begin{corollary} \label{cor:homogeneous}
Let $n\geq 2$.
Assume that the  control system  \eqref{eqn:system} 
is Lie-determined, homogenous, and angularly controllable. 
Then, \eqref{eqn:system} is approximately controllable if and only if it is controllable.
\end{corollary}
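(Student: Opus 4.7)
The proof mirrors that of Theorem \ref{thm:conjecture-1}. The implication ``controllable $\Rightarrow$ approximately controllable'' is trivial, so it suffices to prove the converse. Assume \eqref{eqn:system} is approximately controllable. Because it is Lie-determined, Corollary \ref{cor:existance-foli} applies, and it suffices to rule out alternative \ref{it:foli}. Assume for contradiction that the orbits of \eqref{eqn:system} form a regular $k$-dimensional foliation with $0<k<n$ and dense leaves; I will derive a contradiction from Lemma \ref{lem:existence-larc}.

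The central step is to replace the appeal to Lemma \ref{lem:implies-angul-control} in the proof of Theorem \ref{thm:conjecture-1} by the direct angular-controllability hypothesis and upgrade it into a pointwise transversality statement. By homogeneity, the vector fields $\pi_* f_u$ are well-defined on $S^{n-1}$, so \eqref{eqn:control-general-sphere} is a genuine control system on the sphere, and each flow $e^{t f_u}$ is $\pi$-related to its sphere counterpart. It follows, exactly as in Section \ref{sec:3}, that $\Or^S_{\pi(x)}=\pi(\Or_x)$ for every $x$. Angular controllability forces $\Or^S_{\pi(x)}=S^{n-1}$ as a set; since $\Or^S_{\pi(x)}$ is an injectively immersed submanifold of $S^{n-1}$ by the orbit theorem, this compels $T_{\pi(y)}\Or^S_{\pi(x)}=T_{\pi(y)}S^{n-1}$ at every $y\in\Or_x$. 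Combining the orbit theorem formula for $T_y\Or_x$ with the compatibility of $\pi_*$ with the push-forward along $\pi$-related flows then yields
\[
\pi_*\bigl(T_y\Or_x\bigr)=T_{\pi(y)}\Or^S_{\pi(x)}=T_{\pi(y)}S^{n-1},
\]
equivalently, $T_y\Or_x+\R y=\R^n$ for every $x\in\R^n\mysetminus\{0\}$ and every $y\in\Or_x$.

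This transversality, together with the bound $k<n$, forces $k=n-1$. The orbit partition is therefore a codimension-one regular foliation of $\R^n\mysetminus\{0\}$ transversal to the radial direction with dense leaves, in direct contradiction with Lemma \ref{lem:existence-larc}, which completes the argument.

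The step that I expect to require the most care is the central one above: verifying that the abstract homogeneity condition $\pi_*(f_u(\lambda x))=\pi_*(f_u(x))$ is strong enough to inherit the orbit-to-orbit projection property $\Or^S_{\pi(x)}=\pi(\Or_x)$, which was used in Section \ref{sec:3} only under the more restrictive bilinear assumption. Once this compatibility between the dynamics on $\R^n\mysetminus\{0\}$ and on $S^{n-1}$ is secured, the rest of the proof is a direct reuse of the codimension-one no-go statement of Lemma \ref{lem:existence-larc}.
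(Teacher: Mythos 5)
Your proposal is correct and is essentially the paper's own (implicit) proof: the paper likewise reruns the argument of Theorem~\ref{thm:conjecture-1}, replacing Lemma~\ref{lem:implies-angul-control} by the angular controllability hypothesis (homogeneity guaranteeing that \eqref{eqn:control-general-sphere} is well defined and that orbits project to orbits), deriving the transversality \eqref{eqn:transverse-lie}, and concluding with Lemma~\ref{lem:existence-larc}. The only point worth making explicit when you invoke Lemma~\ref{lem:existence-larc} — which the paper also leaves implicit — is its homogeneity hypothesis on the foliation, i.e.\ that the orbit foliation of a homogeneous system has the scale-invariance used in the lemma's spiralling argument.
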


Since for $n=2$  system \eqref{eqn:system} projects to $S^{1}$, in this case the hypothesis of angular controllability can be easily removed.

\begin{remark}
For all $n$ odd, $n=2k+1$ with $ k\geq 1$, the hypothesis of angular controllability in Corollary \ref{cor:homogeneous}  is superfluous. Indeed, if system \eqref{eqn:system} is Lie-determined, homogenous, and approximately controllable, then Corollary~\ref{cor:existance-foli} implies that either \eqref{eqn:system}  is angularly controllable, or the projection of its orbits forms a nontrivial regular foliation of the even-dimensional sphere $S^{2k}$.  
Since  even dimensional spheres do not admit  nontrivial {regular} foliations (indeed their tangent spaces do not admit any nontrivial subbundles - see, e.g., 
\cite[Problem 9C
]{milnor1974characteristic}), the angular controllability follows. 
\end{remark}

However, it has not been possible to fully remove the hypothesis of angular controllability in Corollary~\ref{cor:homogeneous}. 
In this regard, let us discuss the case $n=4$. 
Due to Corollary~\ref{cor:existance-foli}, if \eqref{eqn:system} is Lie-dermined, homogeneous, approximately controllable, and \eqref{eqn:control-general-sphere} is not controllable, then  the orbits of \eqref{eqn:control-general-sphere} form a {regular} foliation of $S^{3}$ of either dimension one or  codimension one. 
The latter gives a contradiction, since the Novikov compact leaf theorem implies that any codimension-one {regular} foliation of the sphere $S^{3}$ has a compact leaf \cite{novikov1965topology}. 
The former implies that the orbits of \eqref{eqn:control-general-sphere} are given by the flow of a  minimal vector field, i.e., a vector field whose orbits are dense. 
The existence of such flows has been {raised as an open question} 
in \cite{bams/1183522841} for compact metric spaces, and for the sphere $S^{3}$ has been mentioned by Smale in \cite{Smale:1998wx} under the name Gottschalk conjecture; further details can be found in \cite{FBASENER2007453}.
{If the Gottschalk conjecture were to be true, it would imply the existence of 
Lie-determined, homogenous, approximate controllable, yet not controllable systems, showing that Corollary~\ref{cor:existance-foli} fails to hold if we remove the angular controllability hypothesis. }

\bibliographystyle{alphaabbrv}
\bibliography{biblio-v5}

\end{document}